\renewcommand*\env@matrix[1][*\c@MaxMatrixCols c]{%
	\hskip -\arraycolsep
	\let\@ifnextchar\new@ifnextchar
	\array{#1}}
\newcommand{\R}{\mathbb{R}}
\newcommand{\F}{\mathbb{F}}
\newtheorem{thm}{Theorem}[section]
\newtheorem{corollary}[thm]{Corollary}
\newtheorem{prop}[thm]{Proposition}
\theoremstyle{definition}
\newtheorem{definition}[thm]{Definition}
\newtheorem{remark}{Remark}[section]
\def\blfootnote{\xdef\@thefnmark{}\@footnotetext}
\date{}
\begin{document}
	\sloppy
	
	\title{Non-nilpotent Leibniz algebras with\\one-dimensional derived subalgebra\blfootnote{Keywords: Leibniz algebra, Lie algebra, Derivation, Biderivation, Coquecigrue problem.} \blfootnote{\textit{\textup{2020} Mathematics Subject Classification}: 16W25, 17A32, 17B30, 17B40, 20M99, 22A30.} 
	\blfootnote{The authors are supported by University of Palermo and by the “National Group for Algebraic and Geometric Structures, and their Applications” (GNSAGA – INdAM). The first author is also supported by \textbf{UNIPA FFR 2024 VQR 2024}. The second and third authors are also supported by the National Recovery and Resilience Plan (NRRP), Mission 4, Component 2, Investment 1.1, Call for tender No.\ 1409 published on 14/09/2022 by the Italian Ministry of University and Research (MUR), funded by the European Union -- NextGenerationEU -- Project Title \textbf{Quantum Models for Logic, Computation and Natural Processes} (\textbf{QM4NP}) -- CUP \textbf{B53D23030160001} -- Grant Assignment Decree No.\ 1371 adopted on 01/09/2023 by the Italian Ministry of Ministry of University and Research (MUR), and by the \textbf{Sustainability Decision Framework} (\textbf{SDF}) Research Project --  CUP \textbf{B79J23000540005} -- Grant Assignment Decree No.\ 5486 adopted on 04/08/2023. }} \maketitle
	\noindent
	{{Alfonso Di Bartolo, 
		{Gianmarco La Rosa}, {Manuel Mancini} \\ \\
		\footnotesize{Dipartimento di Matematica e Informatica}\\
		\footnotesize{Universit\`a degli Studi di Palermo, Via Archirafi 34, 90123 Palermo, Italy}\\
		\footnotesize{alfonso.dibartolo@unipa.it}, ORCID: 0000-0001-5619-2644 \\
		\footnotesize{gianmarco.larosa@unipa.it}, ORCID: 0000-0003-1047-5993 \\
		\footnotesize{manuel.mancini@unipa.it}, ORCID: 0000-0003-2142-6193}

\begin{abstract}
In this paper we study non-nilpotent non-Lie Leibniz $\F$-algebras with one-dimensional derived subalgebra, where $\F$ is a field with $\operatorname{char}(\F) \neq 2$. We prove that such an algebra is isomorphic to the direct sum of the two-dimensional non-nilpotent non-Lie Leibniz algebra and an abelian algebra. We denote it by $L_n$, where $n=\dim_\F L_n$. This generalizes the result found in \cite{solvLeib}, which is only valid when $\F=\mathbb{C}$ . Moreover, we find the Lie algebra of derivations, its Lie group of automorphisms and the Leibniz algebra of biderivations of $L_n$. Eventually, we solve the \emph{coquecigrue problem} for $L_n$ by integrating it into a Lie rack.
\end{abstract}

\section*{Introduction}

Leibniz algebras were introduced by J.-L.\ Loday in \cite{loday1993version} as a non-skew symmetric version of Lie algebras. Earlier such algebraic structures were also considered by A.\ Blokh, who called them D-algebras \cite{blokhLie} for their strict connection with derivations. Leibniz algebras play a significant role in different areas of mathematics and physics.

Many results of Lie algebras are still valid for Leibniz algebras. One of them is the \emph{Levi decomposition}, which states that any Leibniz algebra over a field $\F$ of characteristic zero is the semidirect sum of its radical and a semisimple Lie algebra. This makes clear the importance of the problem of classification of solvable and nilpotent Lie / Leibniz algebras, which has been dealt with since the early 20th century (see \cite{bartolone2011nilpotent}, \cite{BarDiFal18}, \cite{Falcone2017class}, \cite{6dimLeib}, \cite{4dimLeib}, \cite{solvLeib}, \cite{Katgorodov1} and \cite{5dimLeib}, just for giving a few examples).\\

In \cite{LM2022} and \cite{LaRosaMancini2} nilpotent Leibniz algebras $L$ with one-dimensional derived subalgebra $[L,L]$ were studied and classified. It was proved that, up to isomorphism, there are three classes of \emph{indecomposable} Leibniz algebras with these properties, namely the \emph{Heisenberg} algebras $\mathfrak{l}_{2n+1}^A$, which are parameterized by their dimension $2n+1$ and by a matrix $A$ in canonical form, the \emph{Kronecker} algebra $\mathfrak{k}_n$ and the \emph{Dieudonné} algebra $\mathfrak{d}_n$, both parameterized by their dimension only. We want to complete this classification by studying non-nilpotent Leibniz $\F$-algebras with one-dimensional derived subalgebra, where $\F$ is a field with $\operatorname{char}(\F) \neq 2$. Using the theory of non-abelian extensions of Leibniz algebras introduced in \cite{NonAbExt}, we prove that a non-nilpotent non-Lie Leibniz algebra $L$ with $\dim_\F L=n$ and $\dim_\F [L,L]=1$ is isomorphic to the direct sum of the two-dimensional non-nilpotent non-Lie Leibniz algebra $S_2$, i.e.\ the algebra with basis $\lbrace e_1,e_2 \rbrace$ and multiplication table given by $[e_2,e_1]=e_1$, and an abelian algebra of dimension $n-2$. We denote it by $L_n$. This generalizes the result found in Theorem 2.6 of \cite{solvLeib}, where the authors proved that a \emph{complex} non-split non-nilpotent non-Lie Leibniz algebra with one-dimensional derived subalgebra is isomorphic to $S_2$.

We study in detail the properties of the algebra $L_n$ and we compute the Lie algebra of derivations $\operatorname{Der}(L_n)$, its Lie group of automorphism $\operatorname{Aut}(L_n$) and the Leibniz algebra of biderivations $\operatorname{Bider}(L_n)$.

Finally, we solve the \emph{coquecigrue problem} for the Leibniz algebra $L_n$. We mean the problem, formulated by J.-L.\ Loday in \cite{loday1993version}, of finding a generalization of Lie third theorem to Leibniz algebras. Using M.\ K.\ Kinyon's results for the class of real \emph{split Leibniz algebras} (see \cite{kinyon2004leibniz}), we show how to explicitly integrate $L_n$ into a Lie rack defined over the vector space $\R^n$.

\section{Preliminaries}

We assume that $\F$ is a field with $\operatorname{char}(\F)\neq2$. For the general theory we refer to \cite{ayupov2019leibniz}.

\begin{definition}
	A \emph{left Leibniz algebra} over $\mathbb{F}$ is a vector space $L$ over $\mathbb{F}$ endowed with a bilinear map (called $commutator$ or $bracket$) $\left[-,-\right]\colon L\times L \rightarrow L$ which satisfies the \emph{left Leibniz identity}
	$$
	\left[x,\left[y,z\right]\right]=\left[\left[x,y\right],z\right]+\left[y,\left[x,z\right]\right], \quad \forall x,y,z\in L.
	$$
\end{definition}

	In the same way we can define a right Leibniz algebra, using the right Leibniz identity
	$$
	\left[\left[x,y\right],z\right]=\left[[x,z],y\right]+\left[x,\left[y,z\right]\right], \quad \forall x,y,z\in L.
	$$
	Given a left Leibniz algebra $L$, the multiplication $[x,y]^{\operatorname{op}}=[y,x]$ defines a right Leibniz algebra structure on $L$. 
	
	A Leibniz algebra that is both left and right is called \emph{symmetric Leibniz algebra}. From now on we assume that $\dim_\F L<\infty$.\\
	 
	We have a full inclusion functor $i\colon \textbf{Lie} \rightarrow \textbf{Leib}$ that embeds Lie algebras over $\F$ into Leibniz algebras over $\F$. 
	 Its left adjoint is the functor $\pi\colon \textbf{Leib} \rightarrow \textbf{Lie}$, which associates to each Leibniz algebra $L$ the quotient $L/\operatorname{Leib}(L)$, where $\operatorname{Leib}(L)$ is the smallest bilateral ideal of $L$ such that the quotient $L/\operatorname{Leib}(L)$ becomes a Lie algebra. $\operatorname{Leib}(L)$ is defined as the subalgebra generated by all elements of the form $\left[x,x\right]$, for any $x\in L$, and it is called the \emph{Leibniz kernel} of $L$.
	 
	We define the left and the right center of a Leibniz algebra
	$$
	\operatorname{Z}_l(L)=\left\{x\in L \,|\,\left[x,L\right]=0\right\},\,\,\, \operatorname{Z}_r(L)=\left\{x\in L \,|\,\left[L,x\right]=0\right\}.
	$$
	The intersection of the left and right center is called the \emph{center} of $L$ and it is denoted by $\operatorname{Z}(L)$. In general for a left Leibniz algebra $L$, the left center $\operatorname{Z}_l(L)$ is a bilateral ideal, meanwhile the right center is not even a subalgebra. Furthermore, one can check that $\operatorname{Leib}(L) \subseteq \operatorname{Z}_l(L)$.
		
	The definition of derivation for a Leibniz algebra is the same as in the case of Lie algebras. 
	
	\begin{definition}
		A linear map $d \colon L \rightarrow L$ is a \emph{derivation} of $L$ if
		$$
		d([x,y])=[d(x),y]+[x,d(y)], \quad \forall x,y \in L.
		$$
	\end{definition}
	
	An equivalent way to define a left Leibniz algebra $L$ is to saying that the left adjoint maps $\operatorname{ad}_x=[x,-]$ are derivations. Meanwhile the right adjoint maps $\operatorname{Ad}_x=[-,x]$ are not derivations in general. The set $\operatorname{Der}(L)$ of all derivations of $L$ is a Lie algebra with the usual bracket $[d,d']=d \circ d' - d' \circ d$ and the set $\operatorname{Inn}(L)$ spanned by the left adjoint maps, which are called \emph{inner derivations}, is an ideal of $\operatorname{Der}(L)$. Moreover $\operatorname{Aut}(L)$ is a Lie group and its Lie algebra is precisely $\operatorname{Der}(L)$.
	
	In \cite{loday1993version} J.-L.\ Loday introduced the notion of anti-derivation and biderivation for a Leibniz algebra.
	
	\begin{definition}
		A linear map $D \colon L \rightarrow L$ is an \emph{anti-derivation} of $L$ if
		$$
		D([x,y])=[x,D(y)]-[y,D(x)], \quad \forall x,y \in L.
		$$
	\end{definition}

	The space $\operatorname{ADer}(L$)  of anti-derivations of $L$ has a $\operatorname{Der}(L)$-module structure with the extra multiplication $d \cdot D = d \circ D - D \circ d$, for any derivation $d$ and for any anti-derivation $D$, and one can check that the right adjoint maps $\operatorname{Ad}_x$ are anti-derivations.

\begin{definition}
A \emph{biderivation} of $L$ is a pair $(d,D) \in \operatorname{Der}(L) \times \operatorname{ADer}(L)$ such that
$$
[d(x)+D(x),y]=0, \quad \forall x,y \in L.
$$
\end{definition}

The set $\operatorname{Bider}(L)$ of all biderivations of $L$ has a Leibniz algebra structure with the bracket
$$
[(d,D),(d',D')]=([d,d'],d \cdot D')
$$
and it is defined a Leibniz algebra homomorphism
$$
L \rightarrow \operatorname{Bider}(L), \; x \mapsto (\operatorname{ad}_x,\operatorname{Ad}_x).
$$
The pair $(\operatorname{ad}_x,\operatorname{Ad}_x)$ is called the \emph{inner biderivation} associated with $x \in L$ and the set of all inner biderivations of $L$ forms a Leibniz subalgebra of $\operatorname{Bider}(L)$.

We recall the definitions of solvable and nilpotent Leibniz algebras.
	
	\begin{definition}
		Let $L$ be a left Leibniz algebra over $\F$ and let
		$$
		L^{0}=L,\,\,L^{k+1}=[L^{k},L^k],\quad \forall k\geq0
		$$ be the \emph{derived series of $L$}. $L$ is \emph{$n-$step solvable} if $L^{n-1}\neq0$ and $L^{n}=0$.
	\end{definition}
	\begin{definition}
		Let $L$ be a left Leibniz algebra over $\F$ and let
		$$
		L^{(0)}=L,\,\,L^{(k+1)}=[L,L^{(k)}],\quad \forall k\geq0
		$$ be the \emph{lower central series of $L$}. $L$ is \emph{$n-$step nilpotent} if $L^{(n-1)}\neq0$ and $L^{(n)}=0$.
	\end{definition}
	
	When $L$ is two-step nilpotent, it lies in different varieties of non-associative algebras, such as associative, alternative and Zinbiel algebras. In this case we refer at $L$ as a \emph{two-step nilpotent algebra} and we have the following.

	\begin{prop}{\ }
		\begin{enumerate}
			\item[(i)] 	If $L$ is a two-step nilpotent algebra, then $L^{(1)}=[L,L]\subseteq \operatorname{Z}(L)$ and $L$ is a symmetric Leibniz algebra.
			\item[(ii)] If $L$ is a left nilpotent Leibniz algebra with $\dim_\F [L,L]=1$, then  $L$ is two-step nilpotent.
		\end{enumerate}
	\end{prop}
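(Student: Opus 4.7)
The plan is to handle the two parts separately, with part (ii) being a short dimension-count and part (i) being a direct manipulation of the Leibniz identity.

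For part (i), I would start from the definition of two-step nilpotency, which gives $L^{(2)}=[L,L^{(1)}]=0$. Reading this literally, $[x,[y,z]]=0$ for all $x,y,z\in L$, and this immediately yields $[L,L]\subseteq \operatorname{Z}_r(L)$. To get the inclusion into $\operatorname{Z}_l(L)$ as well, I would substitute into the left Leibniz identity
\[
[x,[y,z]]=[[x,y],z]+[y,[x,z]];
\]
the left-hand side vanishes by two-step nilpotency, and the second term on the right vanishes because $[x,z]\in[L,L]$ so $[y,[x,z]]\in[L,[L,L]]=0$. Hence $[[x,y],z]=0$ for all $x,y,z$, which gives $[L,L]\subseteq\operatorname{Z}_l(L)$. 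Combining both inclusions yields $[L,L]\subseteq\operatorname{Z}(L)$.

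To see that $L$ is also a right Leibniz algebra, I would verify the right Leibniz identity term by term using the inclusion just proven: with $[x,y],[x,z]\in\operatorname{Z}_l(L)$ one has $[[x,y],z]=0$ and $[[x,z],y]=0$, while $[y,z]\in\operatorname{Z}_r(L)$ gives $[x,[y,z]]=0$, so the identity reduces to $0=0+0$. This shows $L$ is symmetric. No step here looks genuinely difficult; the only mildly tricky point is remembering that two-step nilpotency in the left Leibniz setting gives the right center inclusion for free, and the left center inclusion only after an application of the Leibniz identity.

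For part (ii), the argument is a one-line dimension observation. Since $\dim_\F[L,L]=1$, the chain $L^{(k+1)}=[L,L^{(k)}]\subseteq L^{(k)}$ starting at $L^{(1)}=[L,L]$ forces $L^{(2)}$ to be either $0$ or the full one-dimensional space $L^{(1)}$. In the latter case an easy induction gives $L^{(k)}=L^{(1)}\neq 0$ for every $k$, contradicting left nilpotency. Therefore $L^{(2)}=0$, which is exactly the two-step nilpotency condition. I expect no real obstacle in either part; the main thing to be careful about is the asymmetry between $\operatorname{Z}_l$ and $\operatorname{Z}_r$ in part (i), so as not to claim the center inclusion prematurely.
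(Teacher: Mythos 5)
Your argument is correct: the computation $[L,[L,L]]=0$ plus one application of the left Leibniz identity gives $[[L,L],L]=0$, hence $[L,L]\subseteq \operatorname{Z}(L)$ and the right Leibniz identity holds trivially, and the dimension count in (ii) is sound since $L^{(2)}\subseteq L^{(1)}$ with $\dim_\F L^{(1)}=1$ forces $L^{(2)}=0$ under nilpotency. The paper states this proposition without proof, and your argument is exactly the standard one it implicitly relies on, so there is nothing to correct.
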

	
	In \cite{LM2022} the classification of nilpotent Leibniz algebras with one-dimensional derived subalgebra was established. The classification revealed that, up to isomorphism, there exist only three classes of indecomposable nilpotent Leibniz algebras of this type.
	
	\begin{definition}\cite{LM2022}
		Let $f(x)\in\F\left[x\right]$ be a monic irreducible polynomial. Let $k\in\mathbb{N}$ and let $A=(a_{ij})_{i,j}$ be the companion matrix of $f(x)^k$. The \emph{Heisenberg} algebra $\mathfrak{l}_{2n+1}^A$ is the $(2n+1)$-dimensional Leibniz algebra with basis $\left\{e_1,\ldots,e_n,f_1,\ldots,f_n,z\right\}$ and the brackets are given by
		$$
		[e_i,f_j]=(\delta_{ij}+a_{ij})z, \quad [f_j,e_i]=(-\delta_{ij}+a_{ij})z, \quad \forall i,j=1,\ldots,n.
		$$
	\end{definition}
	
	 When $A$ is the zero matrix, then we obtain the $(2n+1)-$dimensional Heisenberg Lie algebra $\mathfrak{h}_{2n+1}$.
	
	\begin{definition}\cite{LM2022}
		Let $n \in \mathbb{N}$. The \emph{Kronecker} algebra $\mathfrak{k}_{n}$ is the $(2n+1)$-dimensional Leibniz algebra with basis $\left\{e_1,\ldots,e_n,f_1,\ldots,f_n,z\right\}$ and the brackets are given by
		\begin{align*}
			&[e_i,f_i] = [f_i,e_i] = z, \quad \forall i=1,\ldots,n \\
			& [e_i,f_{i-1}] = z, [f_{i-1},e_i] = −z, \quad \forall i= 2,\ldots,n.
		\end{align*}
	\end{definition}
	
	\begin{definition}\cite{LM2022}
		Let $n \in \mathbb{N}$. The \emph{Dieudonné} algebra $\mathfrak{d}_n$ is the $(2n+2)$-dimensional Leibniz algebra with basis $\left\{e_1,\ldots,e_{2n+1},z\right\}$ and the brackets are given by
		\begin{align*}
			&\left[e_1,e_{n+2}\right]=z,\\
			&\left[e_i,e_{n+i}\right]=\left[e_i,e_{n+i+1}\right]=z, \quad \forall i=2,\ldots,n,\\
			&\left[e_{n+1},e_{2n+1}\right]=z,\\
			&\left[e_i,e_{i-n}\right]=z, \quad \left[e_i,e_{i-n-1}\right]=-z, \quad \forall i=n+2,\ldots,2n+1.
		\end{align*}
		
	\end{definition}
	
	We want to extend this classification by studying non-nilpotent Leibniz algebras with one-dimensional derived subalgebra.
	
	\section{Non-nilpotent Leibniz algebras with one-dimensional derived subalgebra}
	
	Let $L$ be a non-nilpotent left Leibniz algebra over $\F$ with $\dim_\F L=n$ and $\dim_\F \left[L,L\right]=1$. We observe that such an algebra is two-step solvable since the derived subalgebra $\left[L,L\right]$ is abelian.
	
	It is well known that a non-nilpotent Lie algebra with one-dimensional derived subalgebra is isomorphic to the direct sum of the two-dimensional non-abelian Lie algebra and an abelian algebra (see \cite[Section 3]{erdmann2006introduction}). Thus we are interested in the classification of non-Lie Leibniz algebras with these properties.
	
	In \cite[Theorem 2.6]{solvLeib} the authors prove that a \emph{complex} non-split non-nilpotent non-Lie Leibniz algebra with one-dimensional derived subalgebra is isomorphic to the two-dimensional algebra with basis $\lbrace e_1,e_2 \rbrace$ and multiplication table $[e_2,e_1]=[e_2,e_2]=e_1$. Here we generalize this result when $\F$ is a general field with $\operatorname{char}(\F) \neq 2$.
	
	\begin{prop}
	Let $L$ be a non-nilpotent left Leibniz algebra over $\F$ with $\dim_\F \left[L,L\right]=1$. Then $L$ has a two-dimensional bilateral ideal $S$ which is isomorphic to one of the following Leibniz algebras:
	\begin{itemize}

	    \item[(i)] $S_1=\langle e_1,e_2 \rangle$ with  $ \left[e_2,e_1\right]=-\left[e_1,e_2\right]=e_1$;
	    \item[(ii)] $S_2=\langle e_1,e_2 \rangle$ with  $\left[e_2, e_1\right]=\left[e_2,e_2\right]=e_1$.
	    \end{itemize}

	\end{prop}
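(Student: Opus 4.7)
The plan is to locate inside $L$ a two-dimensional bilateral ideal $S$ containing $[L, L]$, and then classify directly the Leibniz structures that $S$ can carry.

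First, I would fix a generator $z$ of $[L, L]$ and introduce the linear form $\phi \colon L \to \F$ determined by $[x, z] = \phi(x) z$, well-defined since $[x, z] \in [L, L] = \F z$. The non-nilpotence hypothesis forces $L^{(2)} \neq 0$, and because $L^{(2)} = [L, \F z]$ is a subspace of $\F z$ this gives $L^{(2)} = \F z$, equivalently $\phi \neq 0$. Combined with $\dim_\F L \geq 2$, this lets me choose $y \in L \setminus \F z$ with $\phi(y) = 1$: if $\phi(z) = 0$, any element outside $\ker \phi$ works after rescaling (as $\F z \subseteq \ker \phi$), while if $\phi(z) \neq 0$, I use the decomposition $L = \F z \oplus \ker \phi$ and pick $y = z/\phi(z) + k$ for some nonzero $k \in \ker \phi$. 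Setting $S = \F y \oplus \F z$, the subspace $S$ is a bilateral ideal of $L$ because, for every $x \in L$, each of the brackets $[x, y]$, $[y, x]$, $[x, z]$, $[z, x]$ lies in $[L, L] = \F z \subseteq S$.

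To classify the Leibniz algebra $S$, I would write $[y, z] = z$, $[z, y] = \mu z$, $[y, y] = \gamma z$ and $[z, z] = \delta z$, and apply the left Leibniz identity to the triples $(y, z, z)$, $(y, y, y)$ and $(z, y, y)$. These yield, respectively, $\delta = 0$ (here the hypothesis $\operatorname{char}(\F) \neq 2$ enters), $\gamma \mu = 0$ and $\mu(\mu + 1) = 0$, so $\mu \in \{0, -1\}$. If $\mu = -1$, then $\gamma = 0$ and the basis $(e_1, e_2) = (z, y)$ realises $S$ as $S_1$. If $\mu = 0$ and $\gamma \neq 0$, the basis $(e_1, e_2) = (\gamma z, y)$ exhibits $S$ as $S_2$. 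If $\mu = 0$ and $\gamma = 0$, replacing $y$ by $y + z$ produces the new square $[y + z, y + z] = z$, and then $(e_1, e_2) = (z, y + z)$ again identifies $S$ with $S_2$.

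The only genuinely delicate step is the opening one: when $[z, z] \neq 0$, i.e.\ $\phi(z) \neq 0$, the obvious witnesses of $\phi \neq 0$ already lie inside $\F z$, and one has to perturb by a nonzero element of $\ker \phi$ to extract a $y$ outside $\F z$ while keeping $\phi(y) = 1$. Once this $y$ is in place, the rest of the proof reduces to a direct computation with the Leibniz identity inside a two-dimensional algebra.
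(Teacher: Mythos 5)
Your proof is correct, and the construction of the ideal is the same as the paper's: pick $z$ spanning $[L,L]$, use non-nilpotency to find an element acting nontrivially on $z$, and take the two-dimensional span, which is a bilateral ideal because all brackets fall into $[L,L]$. Where you diverge is the final step: the paper simply observes that $S$ is a two-dimensional non-nilpotent Leibniz algebra and invokes Cuvier's classification of two-dimensional Leibniz algebras to conclude $S\cong S_1$ or $S_2$, whereas you re-derive this classification by hand from the left Leibniz identity ($\delta=0$, $\gamma\mu=0$, $\mu(\mu+1)=0$) and exhibit explicit changes of basis; this makes your argument self-contained at the cost of a page of computation. Two minor remarks. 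First, the case $\phi(z)\neq 0$ that you call delicate is vacuous: $[L,L]=\F z$ is a one-dimensional subalgebra, hence abelian (apply the Leibniz identity to $(z,z,z)$), so $[z,z]=0$ and every $x$ with $[x,z]\neq 0$ is automatically independent of $z$ — this is exactly how the paper gets its generator, so you could delete that case (your treatment of it is nevertheless internally correct). Second, the step $\delta z = 2\delta z \Rightarrow \delta = 0$ holds in every characteristic, so $\operatorname{char}(\F)\neq 2$ is not actually needed where you flag it; the hypothesis plays no role in this proposition.
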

	
	\begin{proof}
	Let $\left[L,L\right]=\F z$. $L$ is not nilpotent, then
	\[
	[L,\left[L,L\right]]\neq0,
	\]
	i.e.\ $z \notin \mathrm{Z}_r(L)$. Since $\left[L,L\right]$ is an abelian algebra, there exists a vector $x\in L$, which is linearly independent than $z$, such that $\left[x, z\right]\neq0$. Thus
	\[
	\left[x, z\right]=\gamma z,
	\] 
	for some $\gamma\in\F^\ast$. The subspace $S=\langle x, z\rangle$ is an ideal of $L$ and it is not nilpotent: in fact
	\[
	0\neq\gamma z=\left[x,z\right]\in\left[S,\left[S,S\right]\right].
	\]
	Thus $S$ is a non-nilpotent Leibniz algebra. Using the classification of two-dimensional Leibniz algebras given by C.\ Cuvier in \cite{2-dimCuvier}, $S$ is isomorphic either to $S_1$ or to $S_2$.
	\end{proof}

	\begin{remark}
	The algebras $S_1$ and $S_2$ are respectively the Leibniz algebras $L_2$ and $L_4$ of Section 3.1 in \cite{ayupov2019leibniz}. We observe that $S_1$ is a Lie algebra, meanwhile $S_2$ is a non-right left Leibniz algebra.
	\end{remark}

     One can see $L$ as an extension of the abelian algebra $L_0 = L/S \cong \F^{n-2}$ by $S$ \cite{NonAbExt}
     \begin{equation}\label{ext}
     	\xymatrix{0 \ar[r] & S \ar[r]^-i & L \ar@<.5ex>[r]^-\pi & L_0 \ar@<.5ex>[l]^-s \ar[r] & 0}.
     \end{equation}
     It turns out that there exists an equivalence of Leibniz algebra extensions
      \begin{equation*}
     	\begin{tikzcd} 
     		0\ar[r]
     		&S \arrow [r, "i_2 "] \ar[d, "\operatorname{id}_S"']
     		&L_0 \ltimes_{\omega} S \arrow[r, shift left, "\pi_1"] \ar[d, "\theta"]& 
     		L_0 \ar[r]\ar[l, shift left, "i_1"]\ar[d, "\operatorname{id}_{B}"]
     		&0 \\
     		0\ar[r]
     		&S \arrow [r, "i"]
     		&L \arrow[r, shift left, "\pi"] & 
     		L_0 \ar[r]\ar[l, shift left, "\sigma"]
     		&0 
     	\end{tikzcd}
     \end{equation*}
     where $L_0 \ltimes_{\omega} S$ is the Leibniz algebra defined on the direct sum of vector spaces $L_0 \oplus S$ with the bilinear operation given by
     $$
     [(x,a),(y,b)]_{(l,r,\omega)}=(0,[a,b]+l_x(b)+r_y(a)+\omega(x,y)),
     $$
     where 
     $$
     \omega(x,y)=[\sigma(x),\sigma(y)]_L-\sigma([x,y]_{L_0})=[\sigma(x),\sigma(y)]_L
     $$
     is the Leibniz algebra $2$-cocycle associated with \eqref{ext} and 
     $$
     l_x(b)=[\sigma(x),i(b)]_L, \; \; r_y(a)=[i(a),\sigma(y)]_L
     $$
     define the action of $L_0$ on $S$; $i_1,i_2,\pi_1$ are the canonical injections and projection. The Leibniz algebra isomorphism $\theta$ is defined by $\theta(x,a)= \sigma(x)+i(a)$, for every $(x,a) \in L_0 \oplus S$. 
     
     By \cite[Proposition 4.2]{NonAbExt}, the $2$-cocycle $\omega \colon L_0 \times L_0 \rightarrow S$ and the linear maps $l,r \colon L_0 \rightarrow \operatorname{gl}(S)$ must satisfy the following set of equations
     \begin{enumerate}
     	\item[(L1)] $l_x([a,b])=[l_x(a),b]+[x,l_x(b)]$;
     	\item[(L2)] $r_x([a,b])=[a,r_x(b)]-[b,r_x(a)]$;
     	\item[(L3)] $[l_x(a)+r_x(a),b]=0$;
     	\item[(L4)] $[l_x,l_y]_{\operatorname{gl}(S)}-l_{[x,y]_{L_0}}=\operatorname{ad}_{\omega(x,y)}$;
     	\item[(L5)] $[l_x,r_y]_{\operatorname{gl}(S)}-r_{[x,y]_{L_0}}=\operatorname{Ad}_{\omega(x,y)}$;
     	\item[(L6)] $r_y(r_x(a)+l_x(a))=0$;
     	\item[(L7)] $l_x(\omega(y,z))-l_y(\omega(x,z))-r_z(\omega(x,y))=$ \\ $=\omega([x,y]_{L_0},z)-\omega(x,[y,z]_{L_0})+\omega(y,[x,z]_{L_0})$
     \end{enumerate}
 for any $x,y \in L_0$ and for any $a,b \in S$. Notice that these equations where also studied in \cite{CigoliManciniMetere} in the case of Leibniz algebra \emph{split extensions}.
 
 \begin{remark}
 	The first three equations state that the pair $(l_x,r_x)$ is a biderivation of the Leibniz algebra $S$, for any $x \in L_0$. Biderivations of low-dimensional Leibniz algebras were classified in \cite{Mancini} and it turns out that
 	\begin{itemize}
 		\item $\operatorname{Bider}(S_1)=\left\lbrace  (d,-d) \; | \; d \in \operatorname{Der}(S_1) \right\rbrace$ and \\ \\
 		$
 		\operatorname{Der}(S_1)=\left\lbrace \begin{pmatrix}
 			\alpha & \beta \\
 			0 & 0 \\
 		\end{pmatrix} \Bigg\vert \; \alpha,\beta \in \mathbb{F} \right\rbrace;
 		$
 		\item $\operatorname{Bider}(S_2)=\left\lbrace \left( \begin{pmatrix}
 			\alpha & \alpha\\
 			0 & 0 \\
 		\end{pmatrix},\begin{pmatrix}
 			0 & \beta\\
 			0 & 0 \\
 		\end{pmatrix} \right)  \Bigg\vert \; \alpha,\beta \in \mathbb{F} \right\rbrace$.
 	\end{itemize}
 \end{remark}

We study now in detail the non-abelian extension \eqref{ext} in both cases that $S$ is isomorphic either to $S_1$ or to $S_2$.

\subsection{$S$ is a Lie algebra}

When $S \cong S_1$, we have that $r_y=-l_y$, for any $y \in L_0$ and the bilinear operation of $L_0 \ltimes_{\omega} S_1$ becomes 
$$
[(x,a),(y,b)]_{(l,\omega)}=(0,[a,b]+l_x(b)-l_y(a)+\omega(x,y)).
$$
The linear map $l_x$ is represented by a $2 \times 2$ matrix
$$
\begin{pmatrix}
	\alpha_x & \beta_x \\
	0 & 0 \\
\end{pmatrix}
$$
with $\alpha_x$,$\beta_x \in \F$. From equations (L4)-(L5) it turns out that
$$
\omega(x,y)=(\alpha_x \beta_y - \alpha_y \beta_x)e_1, \; \; \forall x,y \in L_0
$$
and the $2$-cocycle $\omega$ is skew-symmetric. Moreover, equations (L6)-(L7) are automatically satisfied and the resulting algebra $L_0 \ltimes_{\omega} S_1 \cong L$ is a Lie algebra. We conclude that $L$ is isomorphic to the direct sum of $S_1$ and $L_0 \cong \F^{n-2}$.

\subsection{$S$ is not a Lie algebra}

With the change of basis $e_2 \mapsto e_2 - e_1$, $S_2$ becomes the Leibniz algebra with basis $\lbrace e_1,e_2 \rbrace$ and the only non-trivial bracket given by $[e_2,e_1]=e_1$. Now a biderivation of $S_1$ is represented by a pair of matrices
$$
\left( \begin{pmatrix}
	\alpha & 0\\
	0 & 0 \\
\end{pmatrix},\begin{pmatrix}
	0 & \beta\\
	0 & 0 \\
\end{pmatrix} \right) 
$$
with $\alpha, \beta \in \F$ and the pair $(l_x,r_x) \in \operatorname{Bider}(S_2)$ is defined by $l_x(e_1)=\alpha_x e_1$ and $r_x(e_2)=\beta_x e_1$, for any $x \in L_0$.

Equation (L4) states that $\left[l_x,l_y\right]_{\operatorname{gl}(S_2)}=\left[\omega(x,y), -\right]$, with
\begin{align*}
\left[l_x, l_y\right]_{\operatorname{gl}(S_2)}=l_x\circ l_y-l_y\circ l_x&=
\begin{pmatrix}
\alpha_x & 0 \\
0 & 0
\end{pmatrix}
\begin{pmatrix}
\alpha_y & 0 \\
0 & 0
\end{pmatrix}-
\begin{pmatrix}
	\alpha_y & 0 \\
	0 & 0
\end{pmatrix}
\begin{pmatrix}
	\alpha_x & 0 \\
	0 & 0
\end{pmatrix}=\\
&=\begin{pmatrix}
	\alpha_x\alpha_y & 0 \\
	0 & 0
\end{pmatrix}-
\begin{pmatrix}
	\alpha_x\alpha_y & 0 \\
	0 & 0
\end{pmatrix}=
\begin{pmatrix}
	0 & 0 \\
	0 & 0
\end{pmatrix},
\end{align*}
for any $x,y\in L_0$. Thus $\omega(x,y)\in\mathrm{Z}_l(S_2)=\mathbb{F}e_1$.

From equation (L5) we have $\left[l_x,r_y\right]_{\operatorname{gl}(S_2)}=\left[-,\omega(x,y)\right]_{S_2}$, with
    \[
    	\left[l_x, r_y\right]_{\operatorname{gl}(S_2)}=l_x\circ r_y-r_y\circ l_x=
    	\begin{pmatrix}
    		0 & \alpha_x\beta_y\\
    		0 & 0
    	\end{pmatrix}-
    	\begin{pmatrix}
    		0 & 0 \\
    		0 & 0
    	\end{pmatrix}=
    	\begin{pmatrix}
    		0 & \alpha_x\beta_y\\
    		0 & 0
    	\end{pmatrix}.
    \]
     Thus, for every $a=a_1e_1 + a_2 e_2 \in S_2$ and for every  $x,y\in L_0$, we have
     \[
     [a,\omega(x,y)]=\left[l_x, r_y\right](a)=\alpha_x\beta_ya_2 e_1,
     \]
	i.e.\ $\omega(x,y)=\alpha_x \beta_y e_1$. Finally, equations (L6) and (L7) are identically satisfied.
	
Summarizing we have
\begin{equation*}
	\begin{cases}
		l_x\equiv\begin{pmatrix}
			\alpha_x & 0 \\
			0 & 0
		\end{pmatrix}\\\\
		r_y\equiv\begin{pmatrix}
			0 & \beta_y \\
			0 & 0 
		\end{pmatrix}\\\\
		\omega(x,y)=\alpha_x\beta_ye_1
	\end{cases}
\end{equation*}
for every $x,y\in L_0$ and the bilinear operation $[-,-]_{(l,r,\omega)}$ becomes
$$
[(x,a),(y,b)]_{(l,r,\omega)}=(0,(a_2 b_1+\alpha_x b_1 + \beta_y a_2 + \alpha_x \beta _y)e_1),
$$
for any $x$, $y \in L_0$ and for any $a=a_1 e_1 + a_2 e_2$, $b=b_1 e_1 + b_2 e_2 \in S_2$.

If we fix a basis $\lbrace f_3,\ldots,f_n\rbrace$ of $L_0$ and we denote by
\[
\alpha_i=\alpha_{f_i}, \; \; \beta_i=\beta_{f_i}, \; \; \forall i=3,\ldots,n
\]
then $L$ is isomorphic to the Leibniz algebra with basis $\left\{e_1, e_2, f_3, \ldots, f_n\right\}$ and non-zero brackets
	\begin{align*}
		&\left[e_2,e_1\right]=e_1 \\
		&\left[e_2, f_i\right]=\beta_ie_1, \quad\forall i=3,\ldots, n\\
		&\left[f_i, e_1\right]=\alpha_ie_1, \quad\forall i=3,\ldots, n\\
		&\left[f_i, f_j\right]=\alpha_i\beta_je_1, \quad \forall i,j=3,\ldots, n.
	\end{align*}
	

With the change of basis $f_i \mapsto f_i' = \dfrac{f_i}{\beta_i}-e_1$, if $\beta_i \neq 0$, we obtain that
\begin{align*}
	&[e_2,f_i']=e_1-[e_2,e_1]=0,\\
	&[f_i',e_1]=\gamma_i e_1, \; \; \text{where } \gamma_i = \frac{\alpha_i}{\beta_i},\\
	&[f_i,f_j']=\alpha_i e_1 - [f_i,e_1]=0,\\
	&[f_i',f_j']=\gamma_i e_1 - \frac{1}{\beta_i} [f_i,e_1]=0.
\end{align*}
If we denote again $f_i \equiv f_i'$ and $\alpha_i \equiv \gamma_i$ when $\beta_i \neq 0$, then $L$ has basis $\lbrace e_1,e_2,f_3,\ldots,f_n \rbrace$ and non-trivial brackets
\[
[e_2,e_1]=e_1, \; \; [f_i,e_1]=\alpha_i e_1, \; \; \forall i=3,\ldots,n.
\]
Finally, when $\alpha_i \neq 0$, we can operate the change of basis
\[
f_i \mapsto \dfrac{f_i}{\alpha_i} - e_2.
\]
One can check that the only non-trivial bracket now is $[e_2,e_1]=e_1$ and $L$ is isomorphic to the direct sum of $S_2$ and the abelian algebra $L_0 \cong \F^{n-2}$. This allows us to conclude with the following.

\begin{thm}
	Let $\F$ be a field with $\operatorname{char}(\F) \neq 2$. Let $L$ be a non-nilpotent non-Lie left Leibniz algebra over $\F$ with $\dim_\F L= n$ and $\dim_\F [L,L]=1$. Then $L$ is isomorphic to the direct sum of the two-dimensional non-nilpotent non-Lie Leibniz algebra $S_2$ and an abelian algebra of dimension $n-2$. We denote this algebra by $L_n$.
	\qed
\end{thm}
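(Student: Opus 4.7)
The plan is to assemble the theorem from the machinery already developed in the two subsections. The starting point is the proposition that any non-nilpotent left Leibniz algebra with one-dimensional derived subalgebra contains a two-dimensional bilateral ideal $S$ isomorphic to either $S_1$ or $S_2$. Since the analysis in subsection 2.1 shows that $S \cong S_1$ forces $L_0 \ltimes_\omega S_1 \cong L$ to be a Lie algebra (because $r_y = -l_y$ and the resulting $2$-cocycle is skew-symmetric), the non-Lie hypothesis on $L$ rules out this case. Hence $S \cong S_2$, and we are in the setting of subsection 2.2.

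Next, I would invoke the extension description \eqref{ext} and the equivalence with $L_0 \ltimes_\omega S_2$, together with the classification of biderivations of $S_2$, to conclude that $L$ admits a basis $\lbrace e_1,e_2,f_3,\ldots,f_n\rbrace$ with the multiplication table
\begin{align*}
&[e_2,e_1]=e_1,\\
&[e_2,f_i]=\beta_i e_1, \quad \forall i=3,\ldots,n,\\
&[f_i,e_1]=\alpha_i e_1, \quad \forall i=3,\ldots,n,\\
&[f_i,f_j]=\alpha_i\beta_j e_1, \quad \forall i,j=3,\ldots,n,
\end{align*}
as derived in the subsection. This is the concrete form to which all reductions will be applied.

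The third step is to perform the two normalizing changes of basis displayed in the preceding paragraphs. For indices $i$ with $\beta_i \neq 0$, replace $f_i$ by $f_i/\beta_i - e_1$ to kill both $[e_2,f_i]$ and all brackets $[f_i,f_j]$, leaving only $[f_i,e_1]=\gamma_i e_1$ with $\gamma_i=\alpha_i/\beta_i$. Then for indices $i$ with the remaining $\alpha_i \neq 0$, replace $f_i$ by $f_i/\alpha_i - e_2$ to kill $[f_i,e_1]$ as well. One has to be careful to apply these in order and to check (which is immediate from the formulas) that the new $f_i$'s still span a complement to $\langle e_1,e_2\rangle$; the hypothesis $\operatorname{char}(\F) \neq 2$ plays no special role here beyond what was already used in the reductions.

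After these substitutions the only surviving bracket is $[e_2,e_1]=e_1$, so the subspace $\langle f_3,\ldots,f_n\rangle$ becomes an abelian ideal commuting with $\langle e_1,e_2\rangle \cong S_2$, and $L = S_2 \oplus \F^{n-2}$ as a direct sum of Leibniz algebras, proving the theorem. The only mild obstacle is bookkeeping: making sure that successive changes of basis do not reintroduce brackets that a previous step had zeroed out, but this is transparent from the explicit formulas collected above.
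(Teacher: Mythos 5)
Your proposal is correct and follows essentially the same route as the paper: the paper's theorem is stated with \qed precisely because its proof is the preceding discussion (the two-dimensional ideal $S$, elimination of the $S_1$ case by the non-Lie hypothesis, the extension analysis with the biderivations of $S_2$ yielding the multiplication table, and the two successive changes of basis). Nothing in your argument deviates from or adds a gap to that line of reasoning.
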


If we suppose that $L$ is a \emph{non-split} algebra, i.e.\ $L$ cannot be written as the direct sum of two proper ideals, then we obtain the following result, that is a generalization of \cite[Theorem 2.6]{solvLeib} and which is valid over a general field $\F$ with $\operatorname{char}(\F) \neq 2$.

\begin{corollary}
	Let $L$ be a non-split non-nilpotent non-Lie left Leibniz algebra over $\F$ with $\dim_\F L= n$ and $\dim_\F [L,L]=1$. Then $n=2$ and $L\cong S_2$.
	\qed
\end{corollary}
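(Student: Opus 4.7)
The plan is to derive the corollary as an immediate consequence of the theorem just established. That theorem produces an isomorphism $L \cong S_2 \oplus \F^{n-2}$, in which both $S_2$ and the abelian factor $\F^{n-2}$ appear as bilateral ideals of $L$. I would then argue by contradiction: if $n > 2$, then $\F^{n-2}$ is nonzero, so $S_2$ and $\F^{n-2}$ form two proper nonzero ideals of $L$ whose direct sum is all of $L$, contradicting the non-split hypothesis. Hence $n-2 = 0$, which forces $n=2$ and $L \cong S_2$.

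For completeness I would also verify that the remaining case is not vacuous, i.e.\ that $S_2$ itself is non-split. After the change of basis used in the theorem, the only nonzero bracket in $S_2$ is $[e_2,e_1]=e_1$, so $[S_2,S_2]=\F e_1$ is an ideal. Any candidate complementary one-dimensional ideal would be of the form $\F(ae_1+be_2)$ with $b\neq 0$; but $[e_2,ae_1+be_2]=ae_1$ lies in this subspace only when $a=0$, and $\F e_2$ is not itself an ideal because $[e_2,e_1]=e_1\notin\F e_2$. Thus $S_2$ admits no decomposition as a direct sum of two proper ideals.

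There is no real obstacle here: once the theorem is in hand, the corollary reduces to reading off the unique abelian summand and comparing with the definition of non-split, which is why the authors can mark it \emph{qed} without further comment.
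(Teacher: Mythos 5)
Your proposal is correct and matches the paper's (implicit) argument: the corollary follows immediately from the theorem, since a nonzero abelian summand $\F^{n-2}$ would exhibit $L$ as a direct sum of two proper nonzero ideals, contradicting the non-split hypothesis, so $n=2$ and $L\cong S_2$. Your additional check that $S_2$ itself is non-split is a correct (if not strictly required) verification that the statement is non-vacuous.
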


Now we study in detail the algebra $L_n=S_2 \oplus \F^{n-2}$ by describing the Lie algebra of derivations, its Lie group of automorphisms and the Leibniz algebra of biderivations. Moreover, when $\F=\R$, we solve the \emph{coquegigrue problem} (see \cite{covez2013local} and \cite{kinyon2004leibniz}) for $L_n$ by integrating it into a Lie rack.

	\subsection{Derivations, automorphisms and biderivations of $L_n$}
	
	Let $n \geq 2$ and let $L_n=S_2 \oplus \F^{n-2}$. We fix the basis $\mathcal{B}_n=\lbrace e_1,e_2,f_3,\ldots,f_n \rbrace$ of $L_n$ and we recall that the only non-trivial commutator is $[e_2,e_1]=e_1$. A straightforward application of the algorithm proposed in \cite{Mancini} for finding derivations and anti-derivations of a Leibniz algebra as pair of matrices with respect to a fixed basis produces the following.
	
	\begin{thm}{\ }
		\begin{enumerate}
			\item[(i)] A derivation of $L_n$ is represented, with respect to the basis $\mathcal{B}_n$, by a matrix
			\begin{equation}
				$$\[
			\left(
			\begin{array}{c | c} 
			\begin{array}{c c} 
			\alpha & 0 \\
			0 & 0 \\
			\end{array}  & 	\begin{array}{c c c c} 
					0 & 0 & \cdots & 0 \\
					0 & 0 & \cdots & 0 \\
				\end{array} \\ 
				\hline 
					\begin{array}{c c} 
						0 & a_3 \\
						0 & a_4 \\
						\vdots & \vdots \\
						0 & a_n \\
				\end{array} 
			& A 
			\end{array} 
			\right)
				\]$$
			\end{equation}
		where $A \in \operatorname{M}_{n-2}(\F)$.
			
			\item[(ii)] The group of automorphisms $\operatorname{Aut}(L_n)$ is the Lie subgroup of $\operatorname{GL}_n(\F)$ of matrices of the form
			\begin{equation}
				$$\[
				\left(
				\begin{array}{c | c} 
					\begin{array}{c c} 
						\beta & 0 \\
						0 & 1 \\
					\end{array}  & 	\begin{array}{c c c c} 
						0 & 0 & \cdots & 0 \\
						0 & 0 & \cdots & 0 \\
					\end{array} \\ 
					\hline 
					\begin{array}{c c} 
						0 & b_3 \\
						0 & b_4 \\
						\vdots & \vdots \\
						0 & b_n \\
					\end{array} 
					& B 
				\end{array} 
				\right)
				\]$$
			\end{equation}
			where $\beta \neq 0$ and $B \in \operatorname{GL}_{n-2}(\F)$.
			\item[(iii)] The Leibniz algebra of biderivations of $L_n$ consists of the pairs $(d,D)$ of linear endomorphisms of $L_n$ which are represented by the pair of matrices
			\begin{equation}
				$$\[
				\left( \left(
				\begin{array}{c | c} 
					\begin{array}{c c} 
						\alpha & 0 \\
						0 & 0 \\
					\end{array}  & 	\begin{array}{c c c c} 
						0 & 0 & \cdots & 0 \\
						0 & 0 & \cdots & 0 \\
					\end{array} \\ 
					\hline 
					\begin{array}{c c} 
						0 & a_3 \\
						0 & a_4 \\
						\vdots & \vdots \\
						0 & a_n \\
					\end{array} 
					& A 
				\end{array} 
				\right), \left(
				\begin{array}{c | c} 
					\begin{array}{c c} 
						0 & \alpha' \\
						0 & 0 \\
					\end{array}  & 	\begin{array}{c c c c} 
						0 & 0 & \cdots & 0 \\
						0 & 0 & \cdots & 0 \\
					\end{array} \\ 
					\hline 
					\begin{array}{c c} 
						0 & a_3' \\
						0 & a_4' \\
						\vdots & \vdots \\
						0 & a_n' \\
					\end{array} 
					& A' 
				\end{array} 
				\right) \right)
				\]$$
			\end{equation}
			where $A$,$A' \in \operatorname{M}_{n-2}(\F)$.
		\end{enumerate} \qed
	\end{thm}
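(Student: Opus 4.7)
\emph{Plan.} All three parts fit into one computational template: parametrize the relevant linear endomorphism of $L_n$ as a matrix in the basis $\mathcal{B}_n$ with unknown entries, impose the defining identity on every pair of basis vectors, and exploit that the only nonzero bracket in $L_n$ is $[e_2,e_1]=e_1$. Because all other brackets vanish, the great majority of the resulting equations reduce either to $[T(x),y]+[x,T(y)]=0$ (for a derivation) or to $[\phi(x),\phi(y)]=0$ (for a homomorphism), and they collapse almost immediately.

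For (i), write $d(e_1)$, $d(e_2)$, $d(f_j)$ as general linear combinations of the basis. First, $d([e_1,e_1])=0$ kills the $e_2$-component of $d(e_1)$; then $d([e_2,e_1])=d(e_1)$, together with the fact that $[e_1,-]$ and each $[f_i,-]$ vanish on $L_n$, forces $d(e_1)\in\F e_1$ and simultaneously kills the $e_2$-component of $d(e_2)$; next $d([e_2,e_2])=0$ kills the $e_1$-component of $d(e_2)$; and finally $d([f_j,e_1])=0$ together with $d([e_2,f_j])=0$ kill the $e_2$- and $e_1$-components of each $d(f_j)$. All remaining equations are automatic, leaving the column $(a_3,\ldots,a_n)^{T}$ and the block $A$ free, exactly matching the claimed shape.

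For (ii), the same pair-by-pair analysis of $\phi([x,y])=[\phi(x),\phi(y)]$ applies. The decisive identity is $\phi(e_1)=[\phi(e_2),\phi(e_1)]$: the right-hand side can only be nonzero through the term $q\,a\,[e_2,e_1]$, where $a$ is the $e_1$-coefficient of $\phi(e_1)$ and $q$ is the $e_2$-coefficient of $\phi(e_2)$; matching coefficients forces $\phi(e_1)=\beta e_1$ for some $\beta\in\F$ and $q=1$ (otherwise $\phi(e_1)=0$ and $\phi$ cannot be bijective). The identities $[\phi(e_2),\phi(e_2)]=0$, $[\phi(f_j),\phi(e_1)]=0$ and $[\phi(e_2),\phi(f_j)]=0$ then eliminate the remaining off-block entries, and bijectivity of the resulting block matrix is equivalent to $\beta\neq 0$ and $B\in\operatorname{GL}_{n-2}(\F)$.

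For (iii), anti-derivations are analyzed by the same template via $D([x,y])=[x,D(y)]-[y,D(x)]$: evaluating on $(e_1,e_2)$ and then $(e_2,e_1)$ forces $D(e_1)=0$, while $D([e_2,f_j])=0$ kills the $e_1$-component of each $D(f_j)$, leaving $D(e_2)$ with free $e_1$-, $e_2$- and $f_i$-components and each $D(f_j)$ with free $e_2$- and $f_i$-components. Finally the biderivation condition $[d(x)+D(x),y]=0$ for all $y$ is equivalent to $d(x)+D(x)\in\operatorname{Z}_l(L_n)$, and since $\operatorname{Z}_l(L_n)=\operatorname{span}_\F\{e_1,f_3,\ldots,f_n\}$ this kills precisely the $e_2$-components of $D(e_2)$ and of each $D(f_j)$, producing the stated pair of matrices. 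The computations are routine throughout; the only genuine obstacle is the index-bookkeeping across three parallel case analyses, which is exactly what the algorithm of \cite{Mancini} is designed to mechanize.
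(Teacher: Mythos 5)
Your computation is correct and is essentially the paper's own argument: the paper offers no written proof beyond invoking the straightforward basis-by-basis computation (the algorithm of \cite{Mancini}), and your pair-by-pair evaluation of the derivation, homomorphism and anti-derivation identities, followed by the observation that the biderivation condition amounts to $d(x)+D(x)\in\operatorname{Z}_l(L_n)=\langle e_1,f_3,\ldots,f_n\rangle$, reproduces exactly the stated matrix forms. No gaps worth noting.
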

	
	\section{The integration of the Leibniz algebra $L_n$}
	
	The \emph{coquecigrue problem} is the problem formulated by J.-L.\ Loday in \cite{loday1993version} of finding a generalization of Lie third theorem to Leibniz algebras. Given a real Leibniz algebra $L$, one wants to find a manifold endowed with a smooth map, which plays the role of the adjoint map for Lie groups, such that the tangent space at a distinguished element, endowed with the differential of this map, gives a Leibniz algebra isomorphic to $L$. Moreover, when $L$ is a Lie algebra, we want to obatin the simply connected Lie group associated with $L$. From now on, we assume that the underlying field of any algebra is $\F=\R$.\\
	
	In \cite{kinyon2004leibniz} M.\ K.\ Kinyon shows that it is possible to define an algebraic structure, called \emph{rack}, whose operation, differentiated twice, defines on its tangent space at the unit element a Leibniz algebra structure.
	
	\begin{definition}
		A \emph{rack} is a set $X$ with a binary operation $\rhd \colon X \times X \rightarrow X$ which is left autodistributive
		\[
		x \rhd (y \rhd z) = (x \rhd y) \rhd (x \rhd z), \quad \forall x,y,z \in X
		\]
		and such that the left multiplications $x \rhd -$ are bijections.
		
		A rack is \emph{pointed} if there exists an element $1 \in X$ such that $1 \rhd x = x$ and $x \rhd 1 = 1$, for any $x \in X$.
		
		A rack is a \emph{quandle} if the binary operation $\rhd$ is idempotent.
	\end{definition}

The first example of a rack is any group $G$ endowed with its conjugation
\[
x \rhd y = x y x^{-1}, \quad \forall x,y \in G.
\]
We denote this rack by $\operatorname{Conj}(G)$ and we observe that it is a quandle.

\begin{definition}
	A pointed rack $(X, \rhd, 1)$ is said to be a \emph{Lie rack} if $X$ is a smooth manifold, $\rhd$ is a smooth map and the left multiplications are diffeomorphisms.
\end{definition}

M.\ K.\ Kinyon proved that the tangent space $\operatorname{T}_1 X$ at the unit element $1$ of a Lie rack $X$, endowed with the bilinear operation
\begin{equation*}
 [x,y] = \frac{\partial^2}{\partial s\partial t}\bigg|_{s,t=0} \gamma_1(s) \rhd \gamma_2(t)
\end{equation*}
where $\gamma_1,\gamma_2 \colon [0,1] \rightarrow X$ are smooth paths such that $\gamma_1(0)=\gamma_2(0)=1$, $\gamma_1'(0)=x$ and $\gamma_2'(0)=y$, is a Leibniz algebra.

He also solved the coquecigrue problem for the class of \emph{split Leibniz algebras}. Here a Leibniz algebra is said to be \emph{split} if there exists an ideal 
\[\operatorname{Leib}(L) \subseteq I \subseteq \operatorname{Z}_l (L)
\]
and a Lie subalgebra $M$ of $L$ such that $L \cong (M \oplus I, \lbrace -,- \rbrace)$, where the bilinear operation $\lbrace -,- \rbrace$ is defined by
\[
\lbrace (x,a),(y,b) \rbrace = ([x,y],\rho_x(b))
\]
and $\rho \colon M \times I \rightarrow I$ is the action on the $M$-module $I$. $L$ is said to be the \emph{demisemidirect product} of $M$ and $I$. More precisely, we have the following.
\begin{thm}\cite{kinyon2004leibniz}
	Let $L$ be a split Leibniz algebra. Then a Lie rack integrating $L$ is $X=(H \oplus I, \rhd)$, where $H$ is the simply connected Lie group integrating $M$ and the binary operation is defined by
	\[
	(g,a) \rhd (h,b)=(ghg^{-1},\phi_g(b)),
	\]
	where $\phi$ is the exponentiation of the Lie algebra action $\rho$.
\end{thm}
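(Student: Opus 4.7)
The plan is to verify that $(X, \rhd)$ is a pointed Lie rack with unit $(e_H, 0)$, and then to show that the Leibniz bracket induced on $T_{(e_H,0)} X \cong M \oplus I$ coincides with the demisemidirect-product bracket $\lbrace -,- \rbrace$ defining $L$. The construction rests on two standard facts. First, the conjugation operation $g \rhd h = g h g^{-1}$ makes $H$ into a Lie quandle whose tangent Leibniz (in fact Lie) algebra at $e_H$ is $M$. Second, since $H$ is simply connected, the Lie algebra homomorphism $\rho \colon M \to \mathfrak{gl}(I)$ integrates uniquely to a smooth group homomorphism $\phi \colon H \to \operatorname{GL}(I)$ with $d\phi_{e_H} = \rho$.

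First I would check the rack axioms for $\rhd$ componentwise. Left self-distributivity in the first coordinate is self-distributivity of conjugation in $H$; in the second coordinate it reduces to the identity $\phi_{g h g^{-1}} \circ \phi_g = \phi_g \circ \phi_h$, which holds because $\phi$ is a group homomorphism. Bijectivity of left multiplication $(g,a) \rhd -$ is immediate, with inverse $(h,b) \mapsto (g^{-1} h g, \phi_{g^{-1}}(b))$. Pointedness at $(e_H, 0)$ is clear, since $(e_H, 0) \rhd (h,b) = (h,b)$ and $(g,a) \rhd (e_H, 0) = (e_H, 0)$ by linearity of $\phi_g$. Smoothness of $\rhd$ and of its left translations follows from smoothness of the group operations on $H$ and of the action $\phi$, so $X$ is a Lie rack.

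The remaining step, and the most delicate one, is the double-derivative computation recovering the bracket on $L$. Choosing smooth paths $\gamma_i(t) = (g_i(t), a_i(t))$ in $X$ with $\gamma_i(0) = (e_H, 0)$, $\gamma_1'(0) = (x,u)$ and $\gamma_2'(0) = (y,v)$, one has
\[
\gamma_1(s) \rhd \gamma_2(t) = \bigl(g_1(s) g_2(t) g_1(s)^{-1},\; \phi_{g_1(s)}(a_2(t))\bigr),
\]
which is already independent of $a_1$. The mixed partial at $(s,t) = (0,0)$ of the first component yields the Lie bracket $[x,y]_M$ by the classical recovery of the Lie algebra of $H$ from its conjugation rack. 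The second component, after differentiating in $t$ at $t=0$, becomes $\phi_{g_1(s)}(v)$; differentiating again in $s$ at $s=0$ gives $d\phi_{e_H}(x)(v) = \rho_x(v)$. Hence the induced bracket on the tangent space is $([x,y]_M, \rho_x(v)) = \lbrace (x,u),(y,v) \rbrace$, so $X$ integrates $L$. The main obstacle is precisely this last identification: one must justify that $u$ and the entire $a_1$-dependence drop out of the rack operation (which they do, as the formula above shows), and invoke the uniqueness of exponentiation on the simply connected group $H$ to obtain $d\phi_{e_H} = \rho$, so that the second-component derivative really is the given action rather than some extension of it.
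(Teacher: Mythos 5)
Your argument is correct. The paper itself gives no proof of this statement --- it is quoted directly from Kinyon's work \cite{kinyon2004leibniz} --- and your verification (rack axioms via the homomorphism property of $\phi$, pointedness at $(e_H,0)$, and the mixed-partial computation yielding $\bigl([x,y]_M,\rho_x(v)\bigr)$, using simple connectedness of $H$ to integrate $\rho$ to $\phi$ with $d\phi_{e_H}=\rho$) is precisely the standard proof of that cited result.
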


Some years later S.\ Covez generalized M.\ K.\ Kinyon's results proving that every real Leibniz algebra admits an integration into a \emph{Lie local rack} (see \cite{covez2013local}). More recently it was showed in \cite{LM2022} that the integration proposed by S.\ Covez is global for any nilpotent Leibniz algebra. Moreover, when a Leibniz algebra $L$ is integrated into a Lie quandle $X$, it turns out that $L$ is a Lie algebra and $X=\operatorname{Conj}(G)$, where $G$ is the simply connected Lie group integrating $L$.\\

Our aim here is to solve the coquecigrue problem for the non-nilpotent Leibniz algebra $L_n=S_2 \oplus \F^{n-2}$. One can check that $S_2$ is a split Leibniz algebra, in the sense of M.\ K.\ Kinyon, with $I=\operatorname{Z}_l(S_2)\cong \R$ and $M \cong \R$. Thus $L \cong (\R^2, \lbrace -,- \rbrace)$ with the bilinear operation defined by
\[
\lbrace (x_1,x_2),(y_1,y_2) \rbrace = (0,\rho_{x_1}(y_2))
\]
and $\rho_{x_1}(y_2)=x_1 y_2$, for any $x_1,y_2 \in \R$. It turns out that a Lie rack integrating $S_2$ is $(\R^2, \rhd)$, where
\[
(x_1,x_2) \rhd (y_1,y_2) = (y_1,y_2 + e^{x_1}y_2).
\]
and the unit element is $(0,0)$. Finally, one can check that the binary operation
\[
(x_1,x_2,x_3,\ldots,x_n) \rhd (y_1,y_2,y_3,\ldots,y_n) = (y_1,y_2 + e^{x_1}y_2,y_3,\ldots,y_n)
\]
defines on $\R^n$ a Lie rack structure with unit element $1=(0,\ldots,0)$, such that $(\operatorname{T}_1 \R^n, \rhd )$ is a Leibniz algebra isomorphic to $L_n$. This result, combined with the ones of \cite[Section 4]{LM2022}, completes the classification of Lie racks whose tangent space at the unit element gives a Leibniz algebra with one-dimensional derived subalgebra.

%

\printbibliography

\end{document}